\par\end{em}\end{center}\end{quotation}}
\newtheorem{theorem}{Theorem}[section]
\newtheorem{corollary}[theorem]{Corollary}
\newtheorem{lemma}[theorem]{Lemma}
\newtheorem{proposition}[theorem]{Proposition}
\theoremstyle{definition}
\newtheorem{definition}[theorem]{Definition}
\newtheorem{remark}[theorem]{Remark}
\DeclareMathOperator{\rad}{\mathsf{rad}}
\DeclareMathOperator{\Hom}{\mathsf{Hom}}
\DeclareMathOperator{\pd}{\mathsf{pd}}
\DeclareMathOperator{\Aut}{\mathsf{Aut}}
\DeclareMathOperator{\End}{\mathsf{End}}
\DeclareMathOperator{\Mat}{\mathsf{Mat}}
\def\bop{\bigoplus}
\newcommand{\kk}{\mathbbm{k}}
\newcommand{\NN}{\mathbb{N}}
\def\bop{\bigoplus}
\def\iff{if and only if }
\newcommand{\rightarrowdbl}{\longrightarrow\mathrel{\mkern-14mu}\rightarrow}
\newcommand{\twoheadarrow}{\rightarrow\mathrel{\mkern-14mu}\rightarrow}
\newcommand{\lar}{\longrightarrow}
\def\Arr{\Rightarrow}
\def\oA{\bar{A}}		\def\oH{\bar{H}}		\def\oM{\bar{M}}
\def\sb{\subseteq}
\def\kron#1#2{\xymatrix@C=2em{{#1}\ar@/^3pt/[r]\ar@/_3pt/[r]&{#2}}}
\tikzset{
  decorate with/.style={decorate,decoration={shape backgrounds,shape=#1,shape size=1.5mm}},
   deco/.style={decorate with=dart},
   ordi/.style={draw,-stealth,  thick},
   conj/.style={dashed, draw, thick},
   ve/.style={circle, draw, thick, fill=blue!20, inner sep=1pt, outer sep=2pt, minimum size=7pt},
    dot/.style={fill=blue!10,circle,draw, inner sep=1pt, minimum size=5pt},
  dv/.style={star,star points=5,
star point ratio=2, draw, thick, fill=green!20, inner sep=1pt,outer sep=2pt,minimum size=7pt}
}
\tikzset{
    tbl5 nodes/.style={
        rectangle,
        execute at begin node=$,
       execute at end node=$,
       fill=blue!5,
        align=center,
        text depth=0.5ex,
        text height=2ex,
        inner xsep=0pt,
        outer sep=0pt,
           },
    tbl5/.style={
        matrix of nodes,
        row sep=-\pgflinewidth,
        column sep=-\pgflinewidth,
        nodes={
            tbl5 nodes
        },
        execute at empty cell={\node[draw=none]{};}
    }
  }
\title[Some aspects of the theory of nodal orders]{Some aspects of the theory of nodal orders}
\author{Igor Burban}
\address{
Paderborn University
}
\email{burban@math.uni-paderborn.de}
\author{Yuriy Drozd}
\address{
Harvard University and Institute of Mathematics, National Academy of
Sciences of Ukraine}
\email{y.a.drozd@gmail.com}
\subjclass[2010]{Primary 16E60, 16G60, 14A22, 16S38}
\keywords{nodal orders, nodal pairs, crossed products}
\begin{document}

\begin{abstract}
In this paper, we elaborate ring theoretic properties of nodal orders. In particular, we prove that they are  closed under taking crossed  products with finite groups. 
\end{abstract}

\maketitle

\section{Introduction}
Nodal orders are non-commutative generalizations of the ring $\kk\llbracket x, y\rrbracket/(xy)$, where $\kk$ is a field. This class of rings was introduced by the second-named author in \cite{NodalFirst}.

\begin{definition}\label{D:Order}
Let $R$ be  an excellent  reduced  equidimensional ring of {Krull dimension one}  and
$K := \mathsf{Quot}(R)$ the corresponding total ring of fractions. 
An $R$--algebra $A$ is an $R$--\emph{order} if the following conditions are fulfilled:
\begin{itemize}
\item $A$ is a finitely generated torsion free $R$--module.
\item $A_K:= K \otimes_R A$ is a  semi--simple $K$--algebra, having finite length as a $K$--module.
\end{itemize}
A ring $A$ is an \emph{order} if 
 its center $R = Z(A)$ is a reduced excellent  equidimensional ring  of Krull dimension one, and $A$ is an $R$--order. If 
$K:= \mathsf{Quot}(R)$ then $A_K:= K \otimes_R A$ is called \emph{rational envelope} of the order $A$.
\end{definition}

\begin{definition}\label{D:NodalOrders}
An order $A$ is called \emph{nodal} if its center is semi--local  and there exists a \emph{hereditary} overorder $H \supseteq A$ such that the following conditions are satisfied.
\begin{itemize}
\item $J:=\rad(A) = \rad(H)$.
\item For any  finitely generated simple left $A$--module $U$ the length of  $H \otimes_A U$ over $A$ is at most two. 
\end{itemize}
\end{definition}
It is clear that an order  $A$ is nodal if and only if its radical completion $\widehat{A}$ is nodal. Moreover, given  a nodal order $A$,  a hereditary overorder $H$ from Definition \ref{D:NodalOrders} is 
 \emph{uniquely determined} and admits the following intrinsic description:
\begin{equation}\label{E:HereditaryCover}
H = \bigl\{x \in A_K \, \big| \, x J \subseteq J\bigr\} \cong \End_A(J),
\end{equation} where $J$ is viewed as a right $A$--module and $A_K$ is the rational envelope of $A$ (this is essentially a consequence of \cite[Theorem 39.11 and Corollary 39.12]{ReinerMO}).  For a nodal order $A$, the order
$H$ will be called \emph{hereditary cover} of $A$. Hereditary orders form a special subclass of nodal orders. 

Let $R = \kk\llbracket t\rrbracket$ be the algebra of formal power series, where $\kk$ is an algebraically closed field. It was shown in \cite{NodalFirst} that an $R$--order $A$ is representation wild (i.e.~the category of finite length left $A$--modules is representation wild) unless $A$ is nodal. On the other hand, proper nodal orders (i.e.~those orders which are not hereditary) are representation tame (hereditary orders are well-known to be representation discrete; see for instance \cite{DrozdHereditary}). 
 In our  joint work \cite{Nodal}, we  proved that  the derived category of the category of finite length modules over an nodal order $A$ 
has tame representation type.

\smallskip
\noindent
 Of great interest is the global version of nodal orders given by (tame) non-commutative projective nodal curves; see for instance  \cite{bd, bdnpdalcurves}.

\smallskip
\noindent
The main result of this work the following. 

\smallskip
\noindent
\textbf{Theorem}. Let $A$ be a nodal order and $G$ be a finite group acting 
on $A$ (this means that we are given a map 
$G \stackrel{\phi}\lar \Aut(A)$  and a normed two-cocycle $G \times G \stackrel{\omega}\lar A^\ast$ satisfying certain  compatibility conditions described below). If $|G|$ is invertible in $A$ then the crossed product ring $A\bigl[G, (\phi, \omega)\bigr]$ is again a nodal order.

\smallskip
\noindent
\emph{Acknowledgement}. The work of the first-named author was partially supported by the German Research Foundation
SFB-TRR 358/1 2023 -- 491392403.

\section{Reminder on crossed product rings}
\begin{definition}\label{D:GroupAction}
Let $A$ be a ring, $A^\ast$ be its group of units and $G$ be a finite group. Assume we are given a pair of maps
\begin{equation}
G \stackrel{\phi}\lar \Aut(A) \quad \mbox{\rm and} \quad G \times G \stackrel{\omega}\lar A^\ast
\end{equation}
satisfying the following conditions.
\begin{itemize}
\item $\phi_{g_1}\bigl(\phi_{g_2}(a)\bigr) = \omega_{g_1, g_2} \cdot  \bigl(\phi_{g_1 g_2}(a)\bigr) \cdot \omega_{g_1, g_2}^{-1}$ for all $g_1, g_2 \in G$ and $a \in A$. 
\item $\phi_e = \mathsf{Id}$ and $\omega_{g, e} = 1 = \omega_{e, g}$ for all $g \in G$, where $e \in G$ is the neutral element. 
\item $\phi_g\bigl(\omega_{g', g''}\bigr) \cdot \omega_{g, g' g''} = \omega_{g, g'} \cdot \omega_{g g', g''}$ for all $g, g', g'' \in G$. 
\end{itemize}
Then we say that $G$ acts on $A$ via $\phi$ with a system of factors $\omega$; see \cite{ReitenRiedtmann}. The corresponding crossed product  $A\bigl[G, (\phi, \omega)\bigr]$ is a free left $A$--module of rank $|G|$: 
\begin{equation}
A\bigl[G, (\phi, \omega)\bigr] = \Bigl\{\sum\limits_{g \in G} a_g[g]  \, \big| a_g \in A  \Bigr\}
\end{equation}
equipped with the product given by the rule
$$
a[f] \cdot b[g] := a \phi_f(b) \omega_{f, g} [fg] \; \mbox{\rm for any} \; a, b \in  
A \; \mbox{\rm and} \;  f, g \in G.
$$
It is not difficult to show that $A\bigl[G, (\phi, \omega)\bigr]$ is a unital ring, whose multiplicative unit element is $1[e]$; see \cite{ReitenRiedtmann}.
\end{definition}

\begin{remark} Let $G \times G \stackrel{\omega^\circ}\lar A^\ast$ be trivial, i.e.~$\omega^\circ_{g_1, g_2} = 1$ for all $g_1, g_2 \in G$. Then 
$G \stackrel{\phi}\lar \Aut(A)$ is a group homomorphism and 
$
A\bigl[G, (\phi, \omega^\circ)\bigr] = A\ast G
$
is called skew group ring of $A$. 

\smallskip
\noindent
If the ring $A$ is commutative, then  $G \stackrel{\phi}\lar \Aut(A)$ is automatically a  group homomorphism. 
\end{remark}

\begin{definition}\label{D:Strictlyseparable}
A ring extension $A \subseteq B$ is called \emph{strictly separable} if it satisfies the following properties.
\begin{enumerate}
\item The multiplication map $B \otimes_A B \stackrel{\mu}\lar B$ is a split epimorphism of $B$--bimodules. 
\item The inclusion $A \stackrel{\imath}\lar B$ is a split monomorphism of $A$--bimodules. 
\item $B$ is projective as a left $A$--module and flat as a right $A$--module. 
\end{enumerate}
\end{definition}

\begin{remark} The first condition of Definition \ref{D:Strictlyseparable} implies that $B$ is a direct summand of $B \otimes_A B$ viewed as an $B$--subbimodule. An equivalent characterization of the first condition is the following: there exists an element $w \in B \otimes_A B$ such that $\mu(w) = 1$ and $bw = wb$ for all $b \in B$. 
Similarly, the second condition of Definition \ref{D:Strictlyseparable} implies that $A$ is a direct summand of $B$ viewed as an $A$--subbimodule. 
\end{remark}

\smallskip
\noindent
For a ring $C$ and a left $C$--module $M$ we denote by $\pd_C(M)$ its  projective dimension. 

\begin{lemma}\label{L:Dimensions}
Let $A \subseteq B$ be a strictly separable ring extension. 
\begin{enumerate}
\item For any left $B$--module $M$ we have: $\pd_B(M) = \pd_A(M)$.
\item For any left $A$--module $N$ we have: $\pd_A(N) = \pd_B(B \otimes_A N)$.
\item The left global dimensions of $A$ and $B$ are equal. 
\end{enumerate}
\end{lemma}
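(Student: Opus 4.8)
The plan is to deduce all three statements from two structural consequences of strict separability — a "relative splitting" package — followed by a routine comparison of projective resolutions through the functor $B\otimes_A(-)$.

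First I would record the consequences of Definition~\ref{D:Strictlyseparable} that make the machine run. Since $B$ is projective as a left $A$-module, every projective left $B$-module (a summand of some $B^{(I)}$) is projective as a left $A$-module; and since $B$ is flat as a right $A$-module, $B\otimes_A(-)$ carries a projective resolution of a left $A$-module $N$ to a resolution of $B\otimes_A N$ by left $B$-modules that are again projective, because $B\otimes_A A^{(I)}\cong B^{(I)}$. Next, using the element $w=\sum_i x_i\otimes y_i\in B\otimes_A B$ with $\mu(w)=1$ and $bw=wb$ for all $b\in B$ (the reformulation of condition (1) noted in the Remark), I would show that for every left $B$-module $M$ the multiplication $\mu_M\colon B\otimes_A M\to M$, $b\otimes m\mapsto bm$, is a split epimorphism of left $B$-modules, a section being $m\mapsto\sum_i x_i\otimes y_i m$; hence $M$ is a direct summand of $B\otimes_A M$ as a left $B$-module. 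Dually, applying $-\otimes_A N$ to an $A$-bimodule retraction $\pi\colon B\to A$ of $\imath$ (condition (2)) shows that every left $A$-module $N$ is a direct summand of $B\otimes_A N$ as a left $A$-module.

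Granting this, part (1) is immediate: for a left $B$-module $M$, a projective $B$-resolution is also a projective $A$-resolution, so $\pd_A(M)\le\pd_B(M)$; conversely, if $\pd_A(M)=n<\infty$, applying $B\otimes_A(-)$ to a length-$n$ projective $A$-resolution of $M$ yields a length-$n$ projective $B$-resolution of $B\otimes_A M$, whence $\pd_B(B\otimes_A M)\le n$, and since $M$ is a $B$-summand of $B\otimes_A M$ we get $\pd_B(M)\le n$; the case $\pd_A(M)=\infty$ is trivial. For part (2), $\pd_B(B\otimes_A N)\le\pd_A(N)$ by applying $B\otimes_A(-)$ to a projective $A$-resolution of $N$, while $\pd_A(N)\le\pd_A(B\otimes_A N)=\pd_B(B\otimes_A N)$ because $N$ is an $A$-summand of $B\otimes_A N$ and by part (1). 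Part (3) then follows formally: $\mathrm{l.gl.dim}(B)=\sup_M\pd_B(M)=\sup_M\pd_A(M)\le\mathrm{l.gl.dim}(A)$ by part (1), and $\mathrm{l.gl.dim}(A)=\sup_N\pd_A(N)=\sup_N\pd_B(B\otimes_A N)\le\mathrm{l.gl.dim}(B)$ by part (2).

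The only step requiring genuine care is verifying that $m\mapsto\sum_i x_i\otimes y_i m$ is well defined (independent of the chosen expression for $w$) and left $B$-linear: well-definedness is just left $A$-linearity of $b'\mapsto b'm$, while $B$-linearity is precisely where the identity $bw=wb$ — as opposed to mere splitness of $\mu$ over $B$ — is used, through the equality $\sum_i x_i\otimes (y_i b)m=\sum_i (bx_i)\otimes y_i m$ in $B\otimes_A M$. Everything else is bookkeeping with the various tensor-factor module structures together with standard dimension shifting, so I do not expect a serious obstacle beyond this point.
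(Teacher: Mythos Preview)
Your argument is correct and follows essentially the same route as the paper: both use that projective $B$--modules restrict to projective $A$--modules, that $B\otimes_A(-)$ is exact and preserves projectives, that every $B$--module $M$ is a $B$--summand of $B\otimes_A M$, and that every $A$--module $N$ is an $A$--summand of $B\otimes_A N$. The only cosmetic difference is that where you build the section $m\mapsto\sum_i x_i\otimes y_i m$ explicitly from the element $w$, the paper obtains the same splitting by applying $(-)\otimes_B M$ to the $B$--bimodule decomposition $B\otimes_A B\cong B\oplus(\text{rest})$ via the identification $B\otimes_A M\cong (B\otimes_A B)\otimes_B M$.
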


\begin{proof} (1)
Let $M$ be any left $B$--module. Since any projective resolution of $M$ in $B-\mathsf{Mod}$ is also a projective resolution in $A-\mathsf{Mod}$, we have: $\pd_B(M) \ge  \pd_A(M)$. Because the functor $B \otimes_A \,-\,$ is exact and maps projective $A$--modules to projective $B$--modules, we have: $\pd_A(M) \ge  \pd_B(B \otimes_A M)$. Next, observe  that
$B \otimes_A M \cong (B \otimes_A B) \otimes_B M$. Since $B$ is a $B$--bimodule direct summand of $B \otimes_A B$, it follows that the $B$--module $M$ is a direct summand of $B \otimes_A M$. Hence, $\pd_B(B \otimes_A M) \ge \pd_B(M)$ and, a a consequence, $\pd_A(M) \ge  \pd_B(M)$. Hence, $\pd_B(M) = \pd_A(M)$, as asserted. 

\smallskip
\noindent
(2) Let $N$ be any left $A$--module. Using again the fact that the functor $B \otimes_A \,-\,$ is exact and maps projective $A$--modules to projective $B$--modules, we have:
$$
\pd_A(N) \ge \pd_B(B \otimes_A N) = \pd_A(B \otimes_A N) \ge \pd_A(N),
$$
where the last inequality follows from the fact that $A$ is a direct summand of $B$ viewed as $A$--subbimodule. Hence, $\pd_A(N) = \pd_B(B \otimes_A N)$, as asserted. 

\smallskip
\noindent
(3) This statement is a consequence of the previous two. 
\end{proof}

\begin{proposition}\label{P:Dimensions} Let $A$ be a ring and $G$ be a finite group acting on $A$ via a datum $(\phi, \omega)$, as in Definition \ref{D:GroupAction}. Assume that $n:= |G|$ is a unit element in $A$. Then the ring extension $A \subseteq A[G, (\phi, \omega)]$ is strictly separable. 
\end{proposition}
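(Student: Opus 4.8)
The plan is to verify directly the three conditions of Definition \ref{D:Strictlyseparable} for the inclusion $A \subseteq B := A[G,(\phi,\omega)]$. Conditions (2) and (3) hold for \emph{any} finite group action and make no use of the invertibility of $n$; the hypothesis $n \in A^\ast$ is needed only for condition (1), so that is where the real work lies.

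For (3): by construction $B = \bigoplus_{g\in G} A[g]$ is free of rank $n$ as a left $A$-module, hence projective on the left. As a right $A$-module, the multiplication rule gives $a'[g]\cdot a = a'\phi_g(a)[g]$, so each summand $A[g]$ is isomorphic to $A$ with its right module structure twisted by the automorphism $\phi_g$; in particular $A[g]$ is free of rank one, $B$ is free (hence flat) on the right, and (3) follows. For (2): the left $A$-linear projection $\pi\colon B \to A$, $\sum_g a_g[g]\mapsto a_e$, onto the coefficient of $[e]$, is easily checked to be a homomorphism of $A$-bimodules — here one uses $\phi_e = \id$ and $\omega_{g,e}=1=\omega_{e,g}$ — and it satisfies $\pi\circ\imath = \id_A$, so $\imath$ is a split monomorphism of $A$-bimodules.

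The substance of the proof is condition (1). First I would record two preliminary facts. (i) Each $[g]$ is a unit of $B$, with two-sided inverse $[g]^{-1} = \omega_{g^{-1},g}^{-1}[g^{-1}]$; this follows from the cocycle identity (third condition of Definition \ref{D:GroupAction}) applied to the triple $(g,g^{-1},g)$, which yields $\phi_g(\omega_{g^{-1},g}) = \omega_{g,g^{-1}}$, together with the normalization $\omega_{g,e}=1$. (ii) Conjugation by $[g]$ preserves $A$, i.e.\ $[g]^{-1}a[g]\in A$ for every $a\in A$, which is immediate from the explicit formula in (i) and the multiplication rule. The candidate separability element is then
$$
w \;=\; \frac{1}{n}\sum_{g\in G}[g]\otimes_A[g]^{-1}\;\in\;B\otimes_A B,
$$
for which $\mu(w) = \frac1n\sum_{g}[g][g]^{-1} = 1$. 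It remains to check $bw = wb$ for all $b\in B$; since $B$ is generated as a ring by $A$ together with $\{[h]:h\in G\}$, it suffices to treat $b=a\in A$ and $b=[h]$. For $b=a$ one uses the identity $[g]^{-1}a = \bigl([g]^{-1}a[g]\bigr)[g]^{-1}$ from (ii) to slide $a$ across the tensor sign and obtains $aw = wa$. For $b=[h]$ one reindexes the sum by $g\mapsto h^{-1}g$ and uses $[h][g] = \omega_{h,g}[hg]$ together with (ii) to move the resulting units of $A$ across $\otimes_A$ and match the two expressions. By the Remark following Definition \ref{D:Strictlyseparable}, the existence of such $w$ is precisely the statement that $\mu$ is a split epimorphism of $B$-bimodules, which is (1).

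The only genuinely delicate point I anticipate is the bookkeeping of the cocycle twists $\omega$ in the verification that $w$ is central for the $B$-bimodule structure: one must move these units of $A$ in and out of the two tensor legs in the correct order, and it is exactly the cocycle relation and the normalizations $\omega_{g,e}=1=\omega_{e,g}$ that make the computation close up. Everything else — conditions (2) and (3), and the identity $\mu(w)=1$ — is routine.
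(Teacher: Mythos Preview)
Your proof is correct and follows essentially the same approach as the paper: the splitting $\pi$ for condition~(2), the freeness argument for~(3), and the separability element for~(1) all coincide with the paper's choices (your $w=\frac{1}{n}\sum_g[g]\otimes[g]^{-1}$ equals the paper's $\frac{1}{n}\sum_g\omega_{g,g^{-1}}^{-1}[g]\otimes[g^{-1}]$ once you move $\omega_{g^{-1},g}^{-1}$ across the tensor using $\phi_g(\omega_{g^{-1},g})=\omega_{g,g^{-1}}$). Your packaging of the centrality check via ``$[g]$ is a unit and conjugation by $[g]$ preserves $A$'' is a bit cleaner than the paper's direct cocycle manipulations, but the argument is the same.
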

\begin{proof} It is clear that $A[G, (\phi, \omega)]$ is a finitely generated projective  $A$--module, viewed  as a left or as a right module. Next, the map
$$
A[G, (\phi, \omega)] \stackrel{\pi}\rightarrowdbl A, \; \sum\limits_{g \in G} a_g[g] \mapsto a_e,
$$
is a morphism of $A$--bimodules. It follows that the inclusion $A \stackrel{\imath}\lar  A[G, (\phi, \omega)]$ is split. Finally, consider the following element
$$
w:= \frac{1}{n} \sum\limits_{g \in G} \omega^{-1}_{g, g^{-1}} [g] \otimes [g^{-1}] \in B \otimes_A B. 
$$
It is clear that $\mu(w) = 1$. Next, for any $g \in G$ and $a \in A$ we have the following equalities in $B \otimes_A B$: 
$$
[g] \otimes [g^{-1}] a = [g] \otimes \phi_{g^{-1}}(a) [g^{-1}] = [g]  \phi_{g^{-1}}(a) \otimes [g^{-1}]  = \phi_g\bigl(\phi_{g^{-1}}(a)\bigr) [g] \otimes [g^{-1}].
$$
Since $\omega_{g, g^{-1}}^{-1} \phi_g\bigl(\phi_{g^{-1}}(a)\bigr)= \phi_e(a) \omega_{g, g^{-1}}^{-1} = a \omega_{g, g^{-1}}^{-1}$, we conclude that $w a = a w$ for any $a \in A$. 

Next, let $g, h \in G$. Then we have the following equalities in $B \otimes_A B$:
$$
\omega_{g, g^{-1}}^{-1} [g] \otimes [g^{-1}][h] = \omega_{g, g^{-1}}^{-1} [g] \omega_{g^{-1}, h} \otimes   [g^{-1} h] = \omega_{g, g^{-1}}^{-1}  \phi_g\bigl(\omega_{g^{-1}, h}\bigr) [g] \otimes   [g^{-1} h].
$$
Since $\phi_g\bigl(\omega_{g^{-1}, h}\bigr) \omega_{g, g^{-1} h} = \omega_{g, g^{-1}}$, we see that $\omega_{g, g^{-1}}^{-1}  \phi_g\bigl(\omega_{g^{-1}, h}\bigr) = \omega^{-1}_{g, g^{-1} h}$. As a consequence, we get: 
$$
w [h] = \frac{1}{n} \sum_{g \in G} \omega^{-1}_{g, g^{-1} h} [g] \otimes [g^{-1} h].
$$
In a similar way, we have: 
$$
[h] \omega^{-1}_{g, g^{-1}} [g] \otimes [g^{-1}] = \phi_h\bigl(\omega^{-1}_{g, g^{-1}}\bigr) \omega_{h, g} [hg] \otimes [g^{-1}].
$$
Since $\phi_h\bigl(\omega_{g, g^{-1}}\bigr) = \phi_h\bigl(\omega_{g, g^{-1}}\bigr) \omega_{h, e} = \omega_{h, g} \omega_{hg, g^{-1}}$, we obtain: 
$\phi_h\bigl(\omega_{g, g^{-1}}\bigr)^{-1} \omega_{h, g} = \omega^{-1}_{hg, g^{-1}}$. It follows that 
$$
[h] w = \frac{1}{n} \sum_{g \in G} \omega^{-1}_{hg, g^{-1}}  [hg] \otimes [g^{-1}] = w [h]
$$
for any $h \in G$. Thus, we have shown that $b w = w  b$ for any $b \in A[G, (\phi, \omega)]$. 
\end{proof}

\begin{corollary}\label{C:GlobalDimensions} Assume that the assumptions of Proposition \ref{P:Dimensions} are fulfilled. Then Lemma \ref{L:Dimensions} implies that $A$ is semi-simple if and only if $A[G, (\phi, \omega)]$ is semi-simple. Analogously, $A$ is left (respectively, right) hereditary if and only if $A[G, (\phi, \omega)]$ is left (respectively, right) hereditary. 
\end{corollary}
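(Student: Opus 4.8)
The plan is to reduce the entire statement to the equality of global dimensions provided by Lemma \ref{L:Dimensions}, via the standard homological characterizations of semisimplicity and hereditariness. Write $B := A[G, (\phi, \omega)]$. By Proposition \ref{P:Dimensions}, under the stated hypotheses the extension $A \subseteq B$ is strictly separable, so Lemma \ref{L:Dimensions}(3) applies and tells us that the left global dimensions of $A$ and $B$ coincide. Everything then follows by comparing this common value against the thresholds $0$ and $1$.

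First I would recall the two classical criteria: a ring $C$ is semisimple if and only if its left global dimension equals $0$, and $C$ is left hereditary if and only if its left global dimension is at most $1$ (with the symmetric statements on the right, and with semisimplicity being a left-right symmetric notion). Granting these, the semisimplicity equivalence is immediate: $A$ is semisimple iff $\mathrm{l.gl.dim}(A) = 0$ iff $\mathrm{l.gl.dim}(B) = 0$ iff $B$ is semisimple, the middle equivalence being exactly Lemma \ref{L:Dimensions}(3). Running the same argument with the threshold $1$ in place of $0$ yields the left hereditary equivalence: $A$ is left hereditary iff $\mathrm{l.gl.dim}(A) \le 1$ iff $\mathrm{l.gl.dim}(B) \le 1$ iff $B$ is left hereditary.

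The only delicate point is the right hereditary case, since condition (3) of Definition \ref{D:Strictlyseparable} is not left-right symmetric and Lemma \ref{L:Dimensions} is phrased for left modules. Here I would observe that in the situation of Proposition \ref{P:Dimensions} the module $B$ is finitely generated projective over $A$ on both sides (as recorded at the very start of the proof of Proposition \ref{P:Dimensions}), hence is simultaneously projective and flat as a left and as a right $A$-module, while the bimodule splittings of conditions (1) and (2) are already symmetric. Consequently $A \subseteq B$ is strictly separable with the roles of ``left'' and ``right'' interchanged, so the right-module analogue of Lemma \ref{L:Dimensions}(3) holds verbatim; this gives $\mathrm{r.gl.dim}(A) = \mathrm{r.gl.dim}(B)$, and the same threshold argument shows that $A$ is right hereditary iff $B$ is right hereditary.

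I expect the main (and only genuine) obstacle to be exactly this left-right bookkeeping: one must not pretend the right hereditary equivalence follows formally from the \emph{left} statement of Lemma \ref{L:Dimensions}, but rather from its right-handed counterpart, and one must justify that this counterpart is legitimate precisely because the crossed product is projective on both sides. Beyond invoking the homological criteria and this symmetry, no new computation is needed.
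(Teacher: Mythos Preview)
Your proposal is correct and follows exactly the approach the paper intends: the paper gives no separate proof for this corollary, merely recording in the statement itself that it follows from Lemma~\ref{L:Dimensions} (via Proposition~\ref{P:Dimensions}), which is precisely what you do by matching the common global dimension against the thresholds $0$ and $1$. Your careful handling of the right-hereditary case---noting that the crossed product is finitely generated projective on both sides so that the right-module analogue of Lemma~\ref{L:Dimensions} also applies---is a welcome elaboration of a point the paper leaves implicit.
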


\begin{remark} Proposition \ref{P:Dimensions} and Corollary \ref{C:GlobalDimensions}
are  essentially due to Reiten and Riedtmann; see \cite[Theorem 1.3]{ReitenRiedtmann}. Our exposition slightly deviates from the one of \cite{ReitenRiedtmann}.
\end{remark}

\begin{theorem}\label{T:InvariantAction} Let $A$ be a ring and $G$ be a finite group acting on $A$ via a datum $(\phi, \omega)$ as above. Let $P$ be $G$--invariant left $A$--module, i.e.~for any $g \in G$ we have an automorphism of abelian groups
$P \stackrel{\alpha_g}\lar P$ such that $\alpha_g(a p) = \phi_g(a) \alpha_g(p)$ for any $a \in A$ and $p \in P$. 
\begin{enumerate}
\item[(i)] There is an action $(\psi, \xi)$ of $G$ on the ring $\End_A(P)$ given by the following rules.
\begin{enumerate}
\item $G \stackrel{\psi}\lar \Aut\bigl(\End_A(P)\bigr), g \mapsto (\rho \stackrel{\psi_g}\mapsto \alpha_g \rho\alpha_g^{-1})$. 
\item $G \times G \stackrel{\xi}\lar \Aut_A(P), (f, g) \mapsto \xi_{f, g} := \lambda_{\omega_{f, g}^{-1}} \, \alpha_f \alpha_g \alpha_{fg}^{-1}$, where $P \stackrel{\lambda_b} \lar P$ is given by the formula $\lambda_b(p) = bp$ for any $b \in A$ and $p \in P$. 
\end{enumerate}
\item[(ii)] Moreover, the  map
\begin{equation}\label{E:RingHomom}
\End_A(P)\bigl[G, (\psi, \xi)] \stackrel{\Phi}\lar \End_{A[G, (\phi, \omega)]}\bigl(A\bigl[G, (\phi, \omega)\bigr] \otimes_A P \bigr)
\end{equation}
assigning to an element $\rho \{g\} \in \End_A(P)\bigl[G, (\psi, \xi)]$ the endomorphism $$
A\bigl[G, (\phi, \omega)\bigr] \otimes_A P  \lar A\bigl[G, (\phi, \omega)\bigr] \otimes_A P, \;  [h] \otimes p \mapsto [h] [g]^{-1} \otimes \rho\bigl(\alpha_g(p)\bigr)
$$
is a ring isomorphism. 
\end{enumerate}
\end{theorem}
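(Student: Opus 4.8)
The plan is to verify part (i) by a direct but careful computation that $(\psi,\xi)$ satisfies the three axioms of Definition \ref{D:GroupAction}, and then to build the isomorphism $\Phi$ in part (ii) in two stages: first check that $\Phi$ is well-defined and a ring homomorphism, and then produce an explicit two-sided inverse. First I would set $B := A[G,(\phi,\omega)]$ and $E := \End_A(P)$ to keep notation light. For (i)(a), since each $\alpha_g$ is an automorphism of the abelian group $P$ intertwining $\phi_g$, conjugation $\rho\mapsto\alpha_g\rho\alpha_g^{-1}$ does send $A$-linear endomorphisms to $A$-linear endomorphisms (one checks $\alpha_g\rho\alpha_g^{-1}(ap)=\phi_g(\phi_{g^{-1}}(a))\cdots$), so $\psi_g\in\Aut(E)$. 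For (i)(b) one must check that $\xi_{f,g}=\lambda_{\omega_{f,g}^{-1}}\,\alpha_f\alpha_g\alpha_{fg}^{-1}$ actually lies in $E^\ast=\Aut_A(P)$; here the first group-action axiom $\phi_f\phi_g(a)=\omega_{f,g}\phi_{fg}(a)\omega_{f,g}^{-1}$ is exactly what makes $\alpha_f\alpha_g\alpha_{fg}^{-1}$ become $A$-linear after twisting by $\lambda_{\omega_{f,g}^{-1}}$. The three compatibility conditions for $(\psi,\xi)$ then reduce, after expanding, to the three conditions for $(\phi,\omega)$; the cocycle identity for $\xi$ in particular is a bookkeeping consequence of the cocycle identity for $\omega$ together with $\psi_f(\lambda_b)=\lambda_{\phi_f(b)}$, which I would record as a short sublemma.

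For (ii), the first step is well-definedness of $\Phi$. Given $\rho\in E$ and $g\in G$, the assignment $[h]\otimes p\mapsto [h][g]^{-1}\otimes\rho(\alpha_g(p))$ must be checked to be $A$-balanced (so it descends to the tensor product over $A$) and $B$-linear on the left. Left $B$-linearity is immediate since we act by left multiplication on the $[h]$-factor. $A$-balancing uses the defining property of $P$ being $G$-invariant together with $A$-linearity of $\rho$: one moves $a$ across $\otimes$ in $B\otimes_A P$ via $\phi$ and then tracks how $\alpha_g$ and $\rho$ interact with it. Next I would verify $\Phi$ is multiplicative, i.e. $\Phi(\rho_1\{g_1\}\cdot\rho_2\{g_2\})=\Phi(\rho_1\{g_1\})\circ\Phi(\rho_2\{g_2\})$. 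Computing the right-hand side sends $[h]\otimes p$ to $[h][g_1]^{-1}[g_2]^{-1}\otimes\rho_2(\alpha_{g_2}(\rho_1(\alpha_{g_1}(p))))$ — wait, one has to be careful about the order: applying $\Phi(\rho_2\{g_2\})$ first gives $[h][g_2]^{-1}\otimes\rho_2(\alpha_{g_2}(p))$, then $\Phi(\rho_1\{g_1\})$ gives $[h][g_2]^{-1}[g_1]^{-1}\otimes\rho_1(\alpha_{g_1}(\rho_2(\alpha_{g_2}(p))))$. One then rewrites $[g_2]^{-1}[g_1]^{-1}=(\text{unit})\cdot[(g_1g_2)]^{-1}$ in $B$ — precisely where the cocycle $\omega$ reappears — and matches the inner expression with $\xi$ and $\psi$ applied in the crossed product $E[G,(\psi,\xi)]$. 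Since every step is an identity in a free module that is being pushed through $\otimes_A$, the verification is mechanical once the dictionary between $\{-\}$-multiplication on $E$-side and $[-]$-multiplication on $B$-side is set up.

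The final and, I expect, the most delicate step is showing $\Phi$ is bijective. The clean way is to exhibit an explicit inverse. An endomorphism $F$ of $B\otimes_A P$ that is $B$-linear is determined by its values on $1[e]\otimes p$ for $p\in P$, and one writes $F(1[e]\otimes p)=\sum_{g\in G}[g]^{-1}\otimes F_g(p)$ for uniquely determined additive maps $F_g:P\to P$; one then checks, using $B$-linearity of $F$ applied to $a\cdot(1[e]\otimes p)$, that $\rho_g := F_g\circ\alpha_g^{-1}$ (up to the twist by $\lambda_{\omega}$ coming from expressing $[g]^{-1}$ properly) lies in $E=\End_A(P)$, and that $F\mapsto\sum_g\rho_g\{g\}$ inverts $\Phi$. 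Alternatively — and this may be cleaner to write — one avoids the inverse entirely: by Proposition \ref{P:Dimensions} the extension $A\subseteq B$ is strictly separable when $|G|$ is a unit, hence $B\otimes_A P$ is a ``natural'' object, and both sides of \eqref{E:RingHomom} are free of rank $|G|$ over $E$ (the left side tautologically; the right side because $B\otimes_A P\cong\bigoplus_{g\in G}P$ as an $A$-module via $[g]\otimes p\leftrightarrow$ the $g$-component, and $A$-endomorphisms of $\bigoplus_G P$ that happen to be $B$-linear form a free rank-$|G|$ $E$-module by a direct count). Then $\Phi$ is an $E$-linear (in fact $E$-algebra) map between free $E$-modules of the same finite rank that one checks is surjective — any $B$-endomorphism arises from the formula by reading off its ``Fourier coefficients'' $F_g$ as above — hence an isomorphism. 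The main obstacle throughout is purely notational: keeping the $\omega$-twists, the $\alpha_g$'s, and the difference between the $\{g\}$ and $[g]$ multiplications straight, and in particular getting the direction of composition in the definition of $\Phi$ to match the multiplication on $E[G,(\psi,\xi)]$, which is why I would fix all conventions explicitly at the start and then let the computations run.
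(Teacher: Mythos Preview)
Your primary approach is correct and essentially the same as the paper's. The paper packages the bijectivity argument via the adjunction isomorphism $\End_B(B\otimes_A P)\cong\Hom_A(P,B\otimes_A P)$ together with the $A$-module decomposition $B\otimes_A P\cong\bigoplus_{g\in G}P_g$ (where $P_g$ is $P$ with the $\phi_g$-twisted action), which is exactly your ``Fourier coefficient'' description $F(1[e]\otimes p)=\sum_g[g]^{-1}\otimes F_g(p)$ followed by untwisting via $\alpha_g^{-1}$. Your treatment of part~(i) and of well-definedness and multiplicativity of $\Phi$ is likewise in line with the paper's (which simply declares these ``straightforward''); the sublemma $\psi_f(\lambda_b)=\lambda_{\phi_f(b)}$ you isolate is indeed the key bookkeeping identity.

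Your \emph{alternative} route to bijectivity, however, has a genuine gap: you invoke Proposition~\ref{P:Dimensions}, but strict separability of $A\subseteq B$ requires $|G|\in A^\ast$, and Theorem~\ref{T:InvariantAction} carries no such hypothesis---it is stated for an arbitrary ring $A$. So that argument would only prove a weaker statement than the one claimed. Moreover, the assertion that the $B$-linear $A$-endomorphisms of $\bigoplus_G P$ form a free rank-$|G|$ $E$-module ``by a direct count'' is not actually simpler than the Fourier-coefficient analysis: to make it precise you would have to carry out the same identification you already sketched in your first approach, so nothing is gained. Drop the alternative and run your first bijectivity argument.
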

\begin{proof}  Let $f, g \in G$. It is not difficult to check  that $\xi_{f, g}: P \lar P$ is $A$--linear. Next, it is obvious that $\psi_e = \mathsf{Id}$ and $\xi_{g, e} = 1 = \xi_{e, g}$ for all $g \in G$. Another straightforward computation shows that 
$
\xi_{f, g} \cdot \bigl(\psi_{fg}(\rho)\bigr) = \psi_f\bigl(\psi_g(\rho)\bigr) \cdot \xi_{f, g}
$
for all $f, g \in G$ and $\rho \in \End_A(P)$. 
The fact that 
$
\psi_f\bigl(\xi_{g, h}\bigr) \cdot \xi_{f, gh} = \xi_{f, g} \cdot \xi_{fg, h}
$
for any $f, g, h \in G$ can also be checked by a lengthy but completely straightforward verification. 

It is again straightforward that the map $\Phi$ given by (\ref{E:RingHomom}) is indeed a ring homomorphism. To show that $\Phi$ is bijective, consider the following commutative diagram of abelian groups:
\begin{equation*}
\begin{array}{c}
\xymatrix{
\End_A(P)\bigl[G, (\psi, \xi)\bigr] \ar[r]^-{\Phi} \ar[rd]_-{\Psi} &  \End_{A[G, (\phi, \omega)]}\bigl(A\bigl[G, (\phi, \omega)\bigr] \otimes_A P \bigr) \ar[d]^-{\mathsf{can}} \\
& \Hom_{A}\bigl(P, A\bigl[G, (\phi, \omega)\bigr] \otimes_A P \bigr) 
}
\end{array}
\end{equation*}
where $\mathsf{can}$ is the adjunction isomorphism and $\Psi$ is the map given by the formula 
$$
\Psi\Bigl(\sum\limits_{g \in G} \rho_g \{g\}\Bigr)(p) = 
\sum\limits_{g \in G} [g]^{-1} \otimes \rho_g\bigl(\alpha_g(p)\bigr)
$$
for any $p \in P$.  For a fixed $g \in G$ let $\vartheta_g := \rho_g \alpha_g$. Then for any $a \in A$ and $p \in P$ we have: $\vartheta_g(ap) = \phi_g(a) \vartheta_g(p)$. Note that we have an isomorphism of $A$--modules
$$
A\bigl[G, (\phi, \omega)\bigr] \otimes_A P \cong \bigoplus\limits_{g \in G} P_g,
$$
where $P_g = P$ as an abelian group, whereas the $A$--module structure on $P_g$ is given by the formula $a \ast p := \phi_g(a) \circ p$ (here,   $\circ$ is the action of $A$ on $P$). It follows that for any $\vartheta \in \Hom_{A}\bigl(P, A\bigl[G, (\phi, \omega)\bigr] \otimes_A P \bigr)$ one can find a unique family $\left\{\rho_g\right\}_{g \in G}$ of endomorphisms of $P$ such that $\Psi\bigl(\sum_{g \in G} \rho_g \{g\}\bigr) = \vartheta$. Hence, $\Psi$ is bijective and, as a consequence, $\Phi$ is bijective, too. 
\end{proof}

\section{Nodal pairs}\label{S:SectionNodal}
Let $A$ be a ring and $J = \rad(A)$ be its Jacobson radical. 
  Recall that a ring $A$ is called \emph{semilocal} if $\oA:= A/J$ is artinian. If, moreover, $A/J$ is a direct product 
  of skewfields, we call $A$ \emph{basic}. 
   Also, if $M$ is an $A$--module, we denote
  $\oM := M/JM$. For an $A$--module $M$ we denote by $\mu_A(M)$ the minimal number of generators of $M$ 
  and by $\ell_A(M)$ the length of $M$.   For a pair of semilocal rings $A\subseteq  H$ we denote 
  $$\ell^*(A,H)=\sup\bigl\{\ell_A(H\otimes_A U) \, \big|\,  U \text{ is a simple $A$--module}\bigr\}.$$

  \begin{definition}\label{nod} 
  Let $A\subseteq H$ be a pair of semilocal rings.
  \begin{enumerate}
  \item  This pair is called \emph{Backstr\"om} if $\rad(A)=\rad(H)$. 
  \item  It is called \emph{nodal} if it is Backstr\"om and  $\ell^*(A,H)\le2$.
  \item  If $H$ is left hereditary and this pair is Backstr\"om (nodal), $A$ is called a (left) \emph{Backstr\"om ring} (respectively,
  a (left) \emph{nodal ring}).
  \end{enumerate}
  \end{definition}
 
\smallskip
\noindent  
  The following fact is obvious.
   \begin{proposition}\label{modrad} 
   If a pair $A\sb H$ is nodal, so is the pair $\oA\sb\oH$. Conversely, if this pair is Backstr\"om and the pair $\oA\sb\oH$ is nodal,
   the pair $A\sb H$ is nodal too.
   \end{proposition}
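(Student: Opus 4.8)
\emph{Proof strategy.} The plan is to reduce both implications to a single isomorphism of modules, so that nothing beyond formal tensor--product manipulation is needed. First I would record that the statement about $\oA\sb\oH$ only makes sense because of the Backstr\"om hypothesis: if $A\sb H$ is Backstr\"om, say $J:=\rad(A)=\rad(H)$, then $J\sb A\sb H$, so $\oA=A/J$ sits inside $\oH=H/J$, both are semisimple artinian, and the pair $\oA\sb\oH$ is \emph{automatically} Backstr\"om because $\rad(\oA)=0=\rad(\oH)$. Thus in both directions the Backstr\"om part is either assumed or inherited for free, and the entire content is the equality $\ell^*(A,H)=\ell^*(\oA,\oH)$.

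The key step I would isolate as a lemma is the following: for a Backstr\"om pair $A\sb H$ with $J=\rad(A)=\rad(H)$ and any simple left $A$--module $U$, there is a natural isomorphism of left $\oH$--modules $H\otimes_A U\cong \oH\otimes_{\oA}U$. The argument I have in mind: since $J=\rad(A)$ annihilates $U$, one has $U\cong (A/J)\otimes_A U$, so by associativity $H\otimes_A U\cong\bigl(H\otimes_A(A/J)\bigr)\otimes_A U$; and $H\otimes_A(A/J)\cong H/HJ$, where $HJ=J$ because $J=\rad(H)$ is a two--sided ideal of $H$. Hence $H\otimes_A(A/J)\cong H/J=\oH$ as an $(H,A)$--bimodule whose right $A$--action factors through $\oA$, which yields $H\otimes_A U\cong\oH\otimes_{\oA}U$. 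Because the $A$--action on both sides factors through $\oA$, a composition series over $A$ is the same as one over $\oA$, so $\ell_A(H\otimes_A U)=\ell_{\oA}(\oH\otimes_{\oA}U)$.

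Granting the lemma, I would finish as follows. The simple left $A$--modules are precisely the simple left $\oA$--modules (every simple $A$--module is killed by $J=\rad(A)$), so taking the supremum over this common index set and using the lemma term by term gives $\ell^*(A,H)=\ell^*(\oA,\oH)$. If $A\sb H$ is nodal, then it is Backstr\"om with $\ell^*(A,H)\le2$, hence $\oA\sb\oH$ is Backstr\"om with $\ell^*(\oA,\oH)\le2$, i.e.\ nodal. Conversely, if $A\sb H$ is Backstr\"om and $\oA\sb\oH$ is nodal, then $\ell^*(A,H)=\ell^*(\oA,\oH)\le2$, and together with the Backstr\"om hypothesis this says exactly that $A\sb H$ is nodal.

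I do not anticipate a real obstacle here. The only points that need a little care are the identity $HJ=J$ (which relies on $\rad(H)$ being a two--sided ideal of $H$) and the bookkeeping that ``simple $A$--module'' and ``simple $\oA$--module'' refer to the same objects and that length over $A$ and over $\oA$ coincide; everything else is routine.
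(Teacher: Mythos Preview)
Your argument is correct; the equality $\ell^*(A,H)=\ell^*(\oA,\oH)$ via $H\otimes_A U\cong\oH\otimes_{\oA}U$ is exactly the right reduction, and the points you flag (that $HJ=J$ because $J=\rad(H)$ is two--sided in $H$, and that simples and lengths match over $A$ and $\oA$) are the only things worth checking. The paper gives no proof at all---it simply declares the fact ``obvious''---so there is nothing to compare against; your write-up supplies the details the paper omits. One cosmetic point: the associativity step $H\otimes_A U\cong\bigl(H\otimes_A(A/J)\bigr)\otimes_A U$ is a little awkward as written (the outer tensor should really be over $\oA$ once you have collapsed the first factor to $\oH$); the cleaner phrasing is to note directly that $HJ\otimes_A U$ maps to zero in $H\otimes_A U$ since $JU=0$, giving $H\otimes_A U\cong (H/HJ)\otimes_A U=\oH\otimes_{\oA}U$.
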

   
   \begin{lemma}\label{morita} 
    Let $A\sb H$ be a Backstr\"om pair, $P$ be a progenerator in the category of right $A$-modules, $A'=\End_{A}(P)$,
    $P'=H\otimes_A P$ and $H'=\End_{H}(P')$. Then
    \begin{enumerate}
    \item  $P'$ is  a progenerator in the category of right $H$-modules.
    \item  $A'\sb H'$ is a Backstr\"om pair.
    \item  $\ell^*(A',H')=\ell^*(A,H)$.
    \end{enumerate}
     In particular, the properties of a ring ``to be Backstr\"om'' and ``to be nodal'' are Morita invariant.
    \end{lemma}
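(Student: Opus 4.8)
The plan is to prove Lemma \ref{morita} by reducing all three assertions to standard facts about Morita equivalence, using the hypothesis that $A \subseteq H$ is Backstr\"om. First I would observe that since $P$ is a progenerator for right $A$-modules, the functor $\Hom_A(P, -) \colon \mathsf{Mod}\text{-}A \to \mathsf{Mod}\text{-}A'$ is a Morita equivalence, and similarly $P'$, once shown to be a progenerator for right $H$-modules, induces a Morita equivalence $\mathsf{Mod}\text{-}H \simeq \mathsf{Mod}\text{-}H'$. For part (1), I would note that $P$ finitely generated projective over $A$ implies $P' = H \otimes_A P$ is finitely generated projective over $H$; and since $P$ is a generator there is a split epimorphism $A^{n} \twoheadrightarrow P$ of right $A$-modules for some $n$, and applying the exact functor $H \otimes_A -$ gives a split epimorphism $H^{n} \twoheadrightarrow P'$, so $P'$ is a generator. (Alternatively, one can use that $\add(P_A)$ contains $A$ iff $\add(P'_H)$ contains $H$, which is transparent after tensoring.)

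For part (2), the key point is to identify the radicals. Since $A' = \End_A(P)$ with $P$ a progenerator, the Morita equivalence carries $\rad(A)$ to $\rad(A')$; concretely, $\rad(A') = \Hom_A(P, \rad(A)P) = \Hom_A(P, JP)$ where $J = \rad(A)$, and likewise $\rad(H') = \Hom_H(P', \rad(H)P')$. Now I would use the Backstr\"om hypothesis $\rad(A) = \rad(H) =: J$ together with $P' = H \otimes_A P$. The crucial computation is that $JP' = J \otimes_A P$ sits inside $P' = H \otimes_A P$ in such a way that $\Hom_A(P, JP)$ and $\Hom_H(P', JP')$ are identified as subsets of $\End_A(P) \cong \End_H(P')$ once we know $J P = J \otimes_A P$ generates $J P'$ appropriately — here one uses that $H \otimes_A JP = JP'$ because $J$ is a two-sided ideal of both $A$ and $H$ and $J \subseteq A$, so $JP' = J(H\otimes_A P) = (JH)\otimes_A P = J \otimes_A P = H \otimes_A (JP)$. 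This gives $\rad(A') = \rad(H')$, i.e.\ the pair $A' \subseteq H'$ is Backstr\"om. I expect this radical identification to be the main obstacle: one must be careful that the inclusion $A' \subseteq H'$ is the natural one (induced by $P \hookrightarrow P'$, $\rho \mapsto \id_H \otimes \rho$) and that it genuinely identifies the two radicals rather than just abstractly isomorphic rings.

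For part (3), I would pass to simple modules. Under the Morita equivalence $\mathsf{Mod}\text{-}A \simeq \mathsf{Mod}\text{-}A'$, the simple right $A'$-modules are exactly the images $\Hom_A(P, U)$ (equivalently $U \otimes_A P^{*}$-type constructions) of the simple right $A$-modules $U$, and the equivalence is exact and length-preserving. The same holds on the $H$ side. Since the functor $- \otimes_A H$ on right modules corresponds, under the two Morita equivalences, to the functor $- \otimes_{A'} H'$ (this is the compatibility one gets from $P' = H \otimes_A P$, i.e.\ the square of functors $\Hom_A(P,-)$, $\Hom_H(P',-)$, $-\otimes_A H$, $-\otimes_{A'}H'$ commutes up to natural isomorphism), we get $\ell_{A'}\bigl(H' \otimes_{A'} U'\bigr) = \ell_A\bigl(H \otimes_A U\bigr)$ for $U' = \Hom_A(P,U)$ the simple $A'$-module corresponding to $U$. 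Taking the supremum over all simples yields $\ell^*(A',H') = \ell^*(A,H)$. Finally, the "in particular" clause follows: by Corollary \ref{C:GlobalDimensions} (or directly, since Morita equivalence preserves global dimension) $H'$ is left hereditary iff $H$ is, so if $A$ is a (left) Backstr\"om ring, respectively a (left) nodal ring, then so is $A' = \End_A(P)$ for any progenerator $P$, and conversely since Morita equivalence is symmetric.
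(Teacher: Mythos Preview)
The paper does not give its own proof but simply cites \cite{DrozdZembyk} (proof of Proposition~3.1). Your direct argument via Morita theory is sound in outline and is presumably close to what that reference does.

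One small slip in part~(1): the direction of your split epimorphism is backwards. That $P$ is a generator means there is a split epimorphism $P^{n}\twoheadrightarrow A$ (equivalently $A\in\add(P)$), not $A^{n}\twoheadrightarrow P$; the latter only expresses that $P$ is finitely generated projective. Your alternative phrasing (``$\add(P_A)$ contains $A$ iff $\add(P'_H)$ contains $H$'') is correct and is what you should keep.

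In parts~(2) and~(3) the ideas are right, but be careful with the left/right bookkeeping: $P$ is a progenerator for \emph{right} $A$-modules, so the Morita equivalence you obtain is between right module categories, whereas $\ell^{*}(A,H)$ is defined via left modules $H\otimes_A U$. One clean way to handle this is to use the adjunction $\End_H(P')\cong\Hom_A(P,P')$ for~(2), identifying $\rad(H')$ with $\Hom_A(P,JP')=\Hom_A(P,JP)=\rad(A')$ exactly as you indicate (your computation $JP'=JH\otimes_A P=J\otimes_A P$ using $JH=J$ is the key step), and for~(3) to note that a Morita equivalence of right modules induces one of left modules via the dual progenerator, so the induction/length argument goes through.
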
 
     \begin{proof}
   See \cite{DrozdZembyk} (proof of Proposition \,3.1). 
     \end{proof}
         Under the conditions of this lemma we say that the pair $A'\sb H'$ is \emph{Morita equivalent} to the pair $A\sb H$.

\smallskip
\noindent         
We shall need the following combinatorial statement.          
          
      \begin{lemma}\label{number} For $n \in \NN$ let $B = \left(\begin{array}{ccc}
      b_{11} & \dots & b_{1n} \\ \vdots & \ddots & \vdots \\ b_{n1} & \dots & b_{nn} \end{array}\right) \in \Mat_{n \times n}(\NN_0)$ be such that
\begin{itemize}
\item $b_{ii} > 0$ for all $1 \le i \le n$.
\item For any $1 \le i \ne j \le n$ we have: $b_{ij} \ne 0$ if and only if $b_{ji} \ne 0$. 
\end{itemize}      
 For      $\vec{a} = \left(\begin{array}{c} a_1 \\ \vdots \\ a_n \end{array}\right)\in \NN^n$ and $\vec{a}' = B \vec{a}$ consider the following assertions:
     \begin{enumerate}
     \item  $a'_i \le 2a_i$ for all $1 \le i \le n$.
      \smallskip
     \item  For each $1 \le i \le n$ the following condition holds:  either $b_{ii}\le2$ and $b_{ij}=0$ for all $j\ne i$ or there is a unique index $1 \le i'\ne i \le n$ such that \\
        $a_i=a_{i'},\  b_{ii}=b_{ii'}=b_{i'i}=b_{i'i'}=1$  and $b_{ij}=0$ for all  $j\notin\{i,i'\}$.
        \smallskip
     \item  $\sum_{j=1}^nb_{ij}\le2$ for all $i$.     
     \end{enumerate}
     $(1)\Leftrightarrow(2)\Arr(3)$ and, if all $a_i$ are equal, $(3)\Arr(1)$.
      \end{lemma}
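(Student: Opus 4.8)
The plan is to handle the three cheap implications first and then concentrate all effort on $(1)\Rightarrow(2)$, which is the only substantial direction.

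\emph{The easy implications $(2)\Rightarrow(1)$ and $(2)\Rightarrow(3)$.} Fix $i$ and split according to the two alternatives in $(2)$. In the first alternative one has $a'_i=b_{ii}a_i$ with $b_{ii}\le 2$, so $a'_i\le 2a_i$ and $\sum_j b_{ij}=b_{ii}\le 2$. In the second alternative $a'_i=b_{ii}a_i+b_{ii'}a_{i'}=a_i+a_{i'}=2a_i$ and $\sum_j b_{ij}=b_{ii}+b_{ii'}=2$. Hence $(2)$ implies both $(1)$ and $(3)$; once $(1)\Rightarrow(2)$ is proved this yields $(1)\Leftrightarrow(2)\Arr(3)$.

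\emph{The main implication $(1)\Rightarrow(2)$.} Assume $a'_k\le 2a_k$ for all $k$ and fix $i$. Since $a'_i=\sum_j b_{ij}a_j\ge b_{ii}a_i$ and every $a_j\ge 1$, we get $b_{ii}\le 2$. If $b_{ii}=2$, then $a'_i=2a_i+\sum_{j\ne i}b_{ij}a_j\le 2a_i$ forces $b_{ij}=0$ for all $j\ne i$, which is the first alternative of $(2)$; so we may assume $b_{ii}=1$. If moreover $b_{ij}=0$ for all $j\ne i$ we are again in the first alternative. Otherwise choose $j\ne i$ with $b_{ij}\ne 0$; by the symmetric-support hypothesis $b_{ji}\ne 0$ as well. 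Now play the inequality at $i$ against the one at $j$: from $(1)$ at $i$, $a_i+a_j\le b_{ii}a_i+b_{ij}a_j\le a'_i\le 2a_i$, so $a_j\le a_i$; from $(1)$ at $j$ (using $b_{jj}\ge 1$), $a_j+a_i\le b_{jj}a_j+b_{ji}a_i\le a'_j\le 2a_j$, so $a_i\le a_j$. Hence $a_i=a_j$, and we set $i':=j$.

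It remains to normalise the entries. Inserting $a_{i'}=a_i$ and $b_{ii}=1$ back into $(1)$ at $i$ gives $b_{ii'}a_i+\sum_{k\notin\{i,i'\}}b_{ik}a_k\le a_i$, which (as $b_{ii'}\ge 1$ and each $a_k\ge 1$) forces $b_{ii'}=1$ and $b_{ik}=0$ for all $k\notin\{i,i'\}$; in particular $i'$ is the \emph{unique} index $\ne i$ with nonzero entry in row $i$. For row $i'$: the value $b_{i'i'}=2$ is impossible, since by the case already treated it would force $b_{i'i}=0$, contradicting $b_{i'i}\ne 0$; hence $b_{i'i'}=1$, and feeding this together with $a_{i'}=a_i$ into $(1)$ at $i'$ yields $b_{i'i}=1$ and $b_{i'k}=0$ for all $k\notin\{i,i'\}$. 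This is precisely the second alternative of $(2)$ for the index $i$. Finally, if all $a_k$ equal some common value $a$, then $(3)$ gives $a'_i=a\sum_j b_{ij}\le 2a=2a_i$, so $(3)\Arr(1)$.

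The only delicate point is the bookkeeping in $(1)\Rightarrow(2)$: one must verify that the partner index $i'$ produced for $i$ is unique and that the vanishing and the normalisations $b_{ii}=b_{ii'}=b_{i'i}=b_{i'i'}=1$ hold symmetrically for the pair $\{i,i'\}$. Each of these is obtained simply by re-inserting the already-known partial information into inequality $(1)$ at $i$ and at $i'$, as indicated above; there is no genuine obstacle, only care with the indices.
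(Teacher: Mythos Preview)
Your proof is correct. The easy implications and the final $(3)\Rightarrow(1)$ under the equal-$a_i$ hypothesis match the paper's treatment exactly.

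For the main implication $(1)\Rightarrow(2)$, your argument is genuinely different from the paper's. The paper proceeds by induction on $n$: it selects an index $i$ with $a_i$ \emph{minimal}, so that any off-diagonal neighbour $i'$ automatically satisfies $a_{i'}\ge a_i$, and then the inequality $a'_i\le 2a_i$ alone forces $a_{i'}=a_i$ together with the normalisations in row $i$; the symmetric-support hypothesis is invoked once to pass to row $i'$, and then the pair $\{i,i'\}$ is stripped off and induction applied to the remaining indices. You instead work at an arbitrary fixed $i$ with no minimality assumption and no induction: the price is that row $i$ alone only gives $a_j\le a_i$, so you must also invoke inequality $(1)$ at the partner index $j$ (via $b_{ji}\ne 0$) to obtain $a_i\le a_j$. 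This double use of $(1)$ replaces the minimality trick. Your route is slightly more elementary in that it avoids setting up the induction, at the cost of having to argue symmetrically at both $i$ and $i'$; the paper's route localises all the work at the minimal index and then simply discards it.
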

       \begin{proof}
       Obviously, $(2)\Arr(1)$, $(2)\Arr(3)$ and, if all $a_i$ are equal, $(3)\Arr(1)$.
       
       $(1)\Arr(2)$  is proved by induction on $n$. The case $n=1$ is trivial. Obviously, always $b_{ii}\le2$. If there is an index $i$ such that
       $a_{ij}=0$ for all $j\ne i$, just apply the induction to the set of all indices except $i$. Suppose there are no such indices, $a_i$ is the
       smallest and $b_{ii'}\ne0$ for some $i'\ne i$. Then $a_{i'}=a_i$, $b_{ii}=b_{ii'}=1$ and $b_{ij}=0$ if $j\notin\{i,i'\}$. As also $b_{i'i}\ne0$, we get
       $b_{i'i'}=b_{i'i}=1$ and $b_{i'j}=0$ if $j\notin\{i,i'\}$. Now apply induction to the set of all indices except $i$ and $i'$.
       \end{proof}
     
    \begin{theorem}\label{2gen} 
        Let $A\subseteq  H$ be a B\"ackstr\"om  pair, $U_1,U_2,\dots,U_n$ be all non-isomorphic  simple $A$-modules and $V_i=H\otimes_A U_i$ for $1 \le i \le n$.
        Consider the following assertions:
    \begin{enumerate}
    \item      $\mu_A(H)\le2$, i.e.~there exists a surjective $A$--linear map $A^{{2}}  \twoheadarrow  H$. 
    \item  For each $1 \le i \le n$, either $V_i\cong U_i^{b_i}$ with  $b_i\le2$, or there is a unique index
    $i'$ such that $V_i\cong  V_{i'}\cong  U_i\oplus U_{i'}$.
    \item  $A\subseteq H$ is a nodal pair, i.e. $\ell_A(V_i) \le 2$ for all $1 \le i \le n$. 
    \end{enumerate}
    $(1)\Arr(2)\Leftrightarrow(3)$ and, if $A$ is basic, $(3)\Arr(1)$.
       \end{theorem}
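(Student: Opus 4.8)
The plan is to connect the three assertions through the combinatorial Lemma \ref{number}, so the bulk of the work is translating module-theoretic data into the matrix $B$ of that lemma. First I would set up notation: since $A \subseteq H$ is a B\"ackstr\"om pair, $J := \rad(A) = \rad(H)$, so $\oH = H/J$ is a module over the artinian ring $\oA = A/J$, and in fact $\oH$ is a ring containing $\oA$. Write $\ell_A(V_i)$ in terms of composition factors: each $V_i = H \otimes_A U_i$ is a left $H$-module, hence an $\oH$-module (as $JV_i = \rad(H) V_i$ kills the simple $H$-module composition factors after passing through $J$-torsion), and I would express $[V_i] = \sum_j b_{ji} [U_j]$ in the Grothendieck group of finite length $A$-modules, obtaining a matrix $B = (b_{ij})$. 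The diagonal positivity $b_{ii} > 0$ comes from the fact that $U_i$ is a quotient of $V_i$ via the unit $U_i = A\otimes_A U_i \to H \otimes_A U_i$ (which is nonzero since $A \to H$ is injective and $J$ acts as zero), and the symmetry condition $b_{ij} \neq 0 \Leftrightarrow b_{ji} \neq 0$ I would derive from a duality/adjunction argument using that $\oH$ is an artinian ring over $\oA$ together with $\Hom_A(H, A) $-type considerations — this is the point I expect to need some care, because the symmetry of the ``separated quiver'' of the pair $\oA \subseteq \oH$ is exactly what makes nodal pairs well-behaved.

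Next I would identify the three numerical conditions of Lemma \ref{number} with the three assertions of the theorem. Setting $\vec a = (a_1, \dots, a_n)$ with $a_i = \ell_{\oA}(\overline{A e_i})$ or more precisely the multiplicities governing how $\oH$ decomposes, condition $(1)$ of the lemma ($a'_i \le 2 a_i$) should correspond to $\mu_A(H) \le 2$: the number of generators of $H$ as a left $A$-module is read off, by Nakayama, from $\oH = H/JH$ as an $\oA$-module, and $\mu_A(H) = \mu_{\oA}(\oH) = \max_i \lceil (\text{mult. of } U_i \text{ in } \oH)/(\dim_{\text{skew}} U_i\text{-part})\rceil$; getting this $\le 2$ is the inequality $B\vec a \le 2 \vec a$ componentwise. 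Condition $(3)$ of the lemma ($\sum_j b_{ij} \le 2$) I would match with assertion $(3)$ of the theorem, $\ell_A(V_i) \le 2$, since $\ell_A(V_i) = \sum_j b_{ji}$ — wait, one must be careful about whether it is the row sum or column sum, so I would fix the convention so that $\ell_A(V_i) = \sum_{j} b_{ij}$ by transposing $B$ if necessary (the symmetry hypothesis of Lemma \ref{number} makes the hypotheses transpose-invariant, though the conclusions are not, so this bookkeeping matters). Condition $(2)$ of the lemma then matches assertion $(2)$ of the theorem essentially verbatim: the ``unique index $i'$'' case with $b_{ii} = b_{ii'} = b_{i'i} = b_{i'i'} = 1$ says exactly $V_i \cong V_{i'} \cong U_i \oplus U_{i'}$, and the other case $b_{ii} \le 2$, $b_{ij} = 0$ for $j \neq i$ says $V_i \cong U_i^{b_i}$ with $b_i \le 2$.

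With the dictionary in place, the implications follow from Lemma \ref{number}: $(1) \Rightarrow (2)$ is lemma's $(1)\Rightarrow(2)$, $(2) \Leftrightarrow (3)$ of the theorem will follow because lemma gives $(2)\Rightarrow(3)$ for free and $(3) \Rightarrow (2)$ needs... here I should double-check: the lemma only gives $(2) \Leftrightarrow (1)$ and $(2) \Rightarrow (3)$, plus $(3) \Rightarrow (1)$ when all $a_i$ are equal. So for the general $(3) \Rightarrow (2)$ of the theorem I cannot cite the lemma directly; instead I would argue it by hand using the symmetry of $B$: if $\sum_j b_{ij} \le 2$ for all $i$ (row sums), then either row $i$ has a single nonzero entry $b_{ii} \le 2$ (giving the first case), or $b_{ii} = 1$ and $b_{ij} = 1$ for exactly one $j = i' \neq i$; by symmetry $b_{i'i} \neq 0$ so $b_{i'i} = 1$, and then the row sum bound at $i'$ forces $b_{i'i'} = 1$ and no other entries, giving $V_i \cong V_{i'} \cong U_i \oplus U_{i'}$. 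Finally, when $A$ is basic, $\oA$ is a product of skewfields so all the relevant $a_i$ equal $1$ (every simple $U_i$ is ``one-dimensional'' over its endomorphism skewfield in the appropriate sense), hence ``all $a_i$ equal'' holds and Lemma \ref{number}'s $(3) \Rightarrow (1)$ gives $(3) \Rightarrow (1)$ of the theorem. The main obstacle, as noted, is rigorously establishing the symmetry property $b_{ij} \neq 0 \Leftrightarrow b_{ji} \neq 0$ of the decomposition matrix for a B\"ackstr\"om pair — I would reduce to the semisimple-modulo-radical situation via Proposition \ref{modrad} and then invoke the structure of $\oA \subseteq \oH$ as an extension of artinian rings with the same radical, using that $\Hom$ between the relevant bimodules is symmetric because $H \cong \End_A(J)$-type reflexivity makes $H \otimes_A -$ and $\Hom_A(H, -)$ interchangeable on the relevant subcategory.
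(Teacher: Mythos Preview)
Your overall strategy is correct and coincides with the paper's: reduce modulo the common radical via Proposition~\ref{modrad}, set up the decomposition matrix $B=(b_{ij})$, and feed it into Lemma~\ref{number}. The dictionary you build between the three assertions and the three numerical conditions is the right one, and your direct argument for $(3)\Rightarrow(2)$ (using the symmetry of $B$ and the row-sum bound) is valid; the paper instead observes that both $(2)$ and $(3)$ are Morita invariant and reduces to the basic case, but your route is just as short.

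The one place you are making life harder than necessary is the symmetry $b_{ij}\ne 0 \Leftrightarrow b_{ji}\ne 0$, which you flag as the main obstacle and propose to attack via duality or $H\cong\End_A(J)$-type reflexivity. After the reduction of Proposition~\ref{modrad} you may take $A$ and $H$ to be \emph{semi-simple}; pick primitive orthogonal idempotents $e_1,\dots,e_n\in A$ with $U_i\cong Ae_i$ and $V_i\cong He_i$. Then $b_{ij}\ne 0$ if and only if $e_iHe_j\ne 0$, and in a semi-simple ring $H$ this is symmetric in $i$ and $j$ (both say that $e_i$ and $e_j$ meet the same simple Wedderburn factor of $H$). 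That is the paper's argument in one line; no bimodule reflexivity is needed. Once you make this simplification your sketch is essentially the paper's proof.
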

     \begin{proof} The implication $(2)\Arr(3)$ is obvious.
     
\smallskip
\noindent     
     Due to Proposition~\ref{modrad}, we can without loss of generality assume that  $A$ and $H$ are semi-simple. There exits a collection of primitive orthogonal idempotents $e_1, \dots, e_n \in A$ such that  $U_i\cong  Ae_i$ and $V_i\cong He_i$
     for all $1 \le i \le n$. 
     
\smallskip
\noindent     
     Let $A \cong \bop_{i=1}^nU_i^{a_i}$ and $V_i\cong  \bop_{i=1}^nU_j^{b_{ij}}$.
  Then 
     $$H\cong  \bop_{j=1}^n V_j^{a_j} \cong  \bop_{i=1}^nU_i^{\sum_{j=1}^nb_{ij}a_j}.$$ 
     Note that $b_{ij}\ne0$  \iff $\Hom_A(U_i,V_j)\cong  e_i H e_j \ne 0$. In particular, $b_{ii}>0$ for all $1 \le i \le n$. Moreover, if $e_iH e_j\ne0$, 
     then also $e_jHe_i\ne0$, since $H$ is semi-simple. It follows that $b_{ij} \in \NN_0$ and $b_{ij} = 0$ if and only if $b_{ji} = 0$. 
     So, the matrix $B = (b_{ij})$ satisfies the assumption of Lemma~\ref{number}. 
     
\smallskip
\noindent    
The first assertion  $\mu_A(H) \le 2$ is true if and only if $\sum\limits_{j = 1}^n b_{ij} a_j \le 2 a_i$ for all $1 \le i \le n$.            
The third assertion  means that $\sum_{j=1}^nb_{ij}\le2$ for all $i$. Finally,   $A$ being basic means that $a_i=1$ for all $1 \le i \le n$.

By Lemma~\ref{number}, we get an implication $(1)\Arr(3)$ and, if $A$ is basic, all three statements  are 
     equivalent. Since assertions (2) and (3) are invariant under Morita equivalences (see \cite{DrozdZembyk}), $(2)\Leftrightarrow(3)$ is true in a full generality. 
     \end{proof}

\begin{corollary}\label{C:Key} Let $A\subseteq  H$ be a B\"ackstr\"om  pair. 
        Consider the following assertions:
    \begin{enumerate}
    \item      $\mu_A(H)\le2$. 
    \item  $A\subseteq  H$ is a nodal pair.
    \end{enumerate}
   Then we have: $(1)\Arr(2)$ and $(1)\Leftrightarrow (2)$ provided  $A$ is basic. 
\end{corollary}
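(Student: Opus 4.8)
The plan is to deduce this directly from Theorem~\ref{2gen}, since the corollary is obtained from it simply by forgetting the intermediate assertion. First I would identify the assertions: condition (1) of the corollary, $\mu_A(H) \le 2$, is literally condition (1) of Theorem~\ref{2gen}; and condition (2) of the corollary — that $A \subseteq H$ be a nodal pair — unwinds, via the definition $\ell^{*}(A,H) = \sup_i \ell_A(H \otimes_A U_i)$ in Definition~\ref{nod} and the notation $V_i = H \otimes_A U_i$, to the statement that $\ell_A(V_i) \le 2$ for all $i$, which is precisely condition (3) of Theorem~\ref{2gen}.

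With this dictionary in place, the implication $(1) \Rightarrow (2)$ of the corollary is immediate: Theorem~\ref{2gen} gives the chain $(1) \Arr (2) \Leftrightarrow (3)$, and in particular $(1) \Arr (3)$, with no hypothesis on $A$. For the equivalence under the basic hypothesis, I would again quote Theorem~\ref{2gen}: when $A$ is basic all three of its assertions are equivalent, so in particular $(3) \Arr (1)$, i.e. $(2) \Arr (1)$ of the corollary; combined with the implication just noted, this yields $(1) \Leftrightarrow (2)$.

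Since everything reduces to a reference, there is no real obstacle here; the only point requiring a moment's care is that the middle assertion (2) of Theorem~\ref{2gen} — the structural description $V_i \cong U_i^{b_i}$ with $b_i \le 2$, or $V_i \cong V_{i'} \cong U_i \oplus U_{i'}$ — is genuinely not needed in the corollary, it merely served as the combinatorial bridge (through Lemma~\ref{number}) inside the proof of Theorem~\ref{2gen}. One should also make explicit, as above, that the Morita-invariance packaged in Theorem~\ref{2gen} is not invoked at all for the corollary, which is a pure logical corollary of the theorem's own implications.
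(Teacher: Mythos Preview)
Your argument is correct and is exactly what the paper intends: the corollary is stated without proof because it is obtained from Theorem~\ref{2gen} by identifying assertion~(1) here with assertion~(1) there and assertion~(2) here with assertion~(3) there, and then reading off the implications. There is nothing to add.
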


\section{Crossed products of nodal orders}
Let $A$ be an order (see Definition \ref{D:Order}) and $G$ be a finite group of order $n$ acting on $A$ via a datum $(\phi, \omega)$ as in Definition \ref{D:GroupAction}. We assume that $n$ is invertible viewed as an element of $A$. We make the following observations.

\begin{enumerate}
\item Let $R$ be the center of $A$. First note that $n^{-1}$ belongs to $R$. Next, for any $g \in G$ we have: $\phi_g(R) = R$. Moreover, $\phi_{g_1}\bigl(\phi_{g_2}(r)\bigr) = \phi_{g_1 g_2}(r)$ for any $g_1, g_2 \in G$ and $r \in R$. The ring of invariants $R^G$ is again an excellent reduced equidimensional ring of Krull dimension one. Moreover, the ring extension $R^G \subseteq R$ is finite. These statements can be shown in a similar way as for instance \cite[Theorem 4.1]{BurbanDrozdSurface}.

\item Let $K$ be the total rings of quotients of $R$ and $A_K$ be the rational hull of $A$. Then the action of $G$ on $A$ extends to an action on $A_K$: 
$$
A_K \stackrel{\widetilde{\phi}_g}\lar A_K, \; \frac{a}{b} \mapsto \frac{\phi_g(a)}{\phi_g(b)}.
$$
We see that $G$ acts on $A_K$ via the datum $(\widetilde{\phi}, \omega)$.

\item According to  Corollary \ref{C:GlobalDimensions}, the ring 
$A_K\bigl[G, (\widetilde\phi, \omega)\bigr]$ is semi-simple. Next, $R^G$ is contained in the center of the ring $A_K\bigl[G, (\phi, \omega)\bigr]$ and the extension $R^G \subset A\bigl[G, (\phi, \omega)\bigr]$ is finite. Next, $A\bigl[G, (\phi, \omega)\bigr]$ is torsion free viewed as a module over $R^G$ and its rational hull is $A_K\bigl[G, (\phi, \omega)\bigr]$. It follows, that the crossed product $A\bigl[G, (\phi, \omega)\bigr]$ is again an order. 

\item Now assume that $R$ is semi-local. Then $A$ is semi-local, too. Recall that the Jacobson radical $J$ of $A$ has the following characterization:
$$
J = \bigl\{x \in A \,\big|\, 1 + ax b \in A^\ast \; \mbox{for all} \; a, b \in A \bigr\}.
$$
It is clear that $\phi_g(J) = J$ for all $g \in G$. Let $\bar{A} := A/J$. It follows that for any $g \in G$ we have an induced ring automorphism 
$$\bar{A} \stackrel{\bar\phi_g}\lar \bar{A}, \; \bar{a} \mapsto \overline{\phi_g(a)}.
$$
Let $G \times G \stackrel{\bar\omega}\lar \bar{A}^\ast$ be given by the composition of $\omega$ and the natural group homomorphism  $A^\ast \lar 
\bar{A}^\ast$. It is clear that $G$ acts on $\bar{A}$ via the datum 
$(\bar\phi, \bar\omega)$. By Corollary \ref{C:GlobalDimensions}, the ring 
$\bar{A}\bigl[G, (\bar\phi, \bar\omega)\bigr]$ is semi-simple. It follows that $J\bigl[G, (\phi, \omega)\bigr]$ is the Jacobson radical of the order
$A\bigl[G, (\phi, \omega)\bigr]$.

\item Consider the  overorder $H$ of $A$ given by the formula (\ref{E:HereditaryCover}). It follows that $\widetilde{\phi_g}(H) = H$ for all $g \in G$. Hence, the group $G$ acts on $H$ via the datum $(\widetilde{\phi}, \omega)$ and the crossed product  
 $H\bigl[G, (\widetilde\phi, \omega)\bigr]$ is an overoder of 
$A\bigl[G, (\phi, \omega)\bigr]$. Moreover, if $H$ is hereditary then 
$H\bigl[G, (\widetilde\phi, \omega)\bigr]$ is hereditary, too. 
\end{enumerate}

\begin{theorem}\label{T:Main} Let $A$ be a nodal order and $G$ be a finite group of order $n$ acting on $A$ via a datum $(\phi, \omega)$. If $n$ is invertible in $A$ then the order $A\bigl[G, (\phi, \omega)\bigr]$ is again nodal. 
\end{theorem}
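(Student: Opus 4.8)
The plan is to reduce the statement to the combinatorial criterion of Corollary~\ref{C:Key} by passing to the radical quotient. Write $B := A[G,(\phi,\omega)]$ and $\mathcal H := H[G,(\widetilde\phi,\omega)]$, where $H$ is the hereditary cover of $A$. From observation~(4) of this section we know $\rad(B) = J[G,(\phi,\omega)]$ with $J = \rad(A) = \rad(H)$, and the same identity applied to $H$ (which is also an order with radical $J$) gives $\rad(\mathcal H) = J[G,(\widetilde\phi,\omega)]$. As a left $B$--module $\mathcal H$ is identified with $B \otimes_A H$ (both are $\bigoplus_{g\in G}[g]\otimes H$ as left $A$--modules), so $J[G,(\phi,\omega)] \subseteq B$ equals $J[G,(\widetilde\phi,\omega)]\subseteq\mathcal H$, i.e.~$\rad(B) = \rad(\mathcal H)$. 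Thus $B \subseteq \mathcal H$ is a B\"ackstr\"om pair, and by observation~(5) together with Corollary~\ref{C:GlobalDimensions} the overorder $\mathcal H$ is hereditary. So it remains to check that this B\"ackstr\"om pair is nodal.

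Next I would use Proposition~\ref{P:Dimensions}: since $n = |G|$ is invertible in $A$, the extension $A \subseteq B$ is strictly separable, and in particular $B$ is projective (even free) as a left $A$--module. Because $\mathcal H \cong B \otimes_A H$ as a left $B$--module, any surjection $A^2 \twoheadrightarrow H$ of left $A$--modules (which exists iff $A \subseteq H$ satisfies $\mu_A(H)\le 2$) induces a surjection $B^2 \twoheadrightarrow \mathcal H$ of left $B$--modules upon applying the exact functor $B\otimes_A{-}$. Hence $\mu_B(\mathcal H) \le 2$ whenever $\mu_A(H) \le 2$. By the implication $(1)\Rightarrow(2)$ of Corollary~\ref{C:Key}, $B \subseteq \mathcal H$ is then nodal, which by the intrinsic characterization \eqref{E:HereditaryCover} means exactly that $B$ is a nodal order with hereditary cover $\mathcal H$.

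This pushes the burden onto showing $\mu_A(H) \le 2$. Here the subtlety is that $A$ being nodal only gives $\ell^*(A,H) \le 2$, and Corollary~\ref{C:Key} supplies $\mu_A(H)\le 2$ for free only when $A$ is \emph{basic}. I would circumvent this by a Morita reduction, invoking Lemma~\ref{morita}: replace $A$ by its basic ring $A' = \End_A(P)$ for a suitable progenerator $P$, and $H$ by $H' = \End_H(H\otimes_A P)$; then $A' \subseteq H'$ is still a nodal B\"ackstr\"om pair, now with $A'$ basic, so $\mu_{A'}(H') \le 2$. Since by Corollary~\ref{C:GlobalDimensions} (applied to $A'$, which is again a hereditary-covered order) and the Morita-invariance of the nodal property this is enough to run the argument of the previous paragraph at the level of $A'$, and since being nodal is preserved under Morita equivalence (Lemma~\ref{morita}) and commutes with forming crossed products in the appropriate sense via Theorem~\ref{T:InvariantAction}, one transports the conclusion back to $A$.

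I expect the main obstacle to be making the interaction between the Morita reduction and the group action $(\phi,\omega)$ fully rigorous: the progenerator $P$ should be chosen $G$--invariant (in the sense of Theorem~\ref{T:InvariantAction}) so that the $G$--action descends to $A' = \End_A(P)$ and the crossed product $A'[G,(\psi,\xi)]$ is Morita equivalent to $B = A[G,(\phi,\omega)]$ — indeed Theorem~\ref{T:InvariantAction}(ii) identifies $A'[G,(\psi,\xi)]$ with $\End_B(B\otimes_A P)$, and $B\otimes_A P$ is a progenerator over $B$ since $P$ is over $A$ and $B$ is a strictly separable extension. Once this compatibility is in place, the diagram of reductions closes: nodal $A$ $\leadsto$ basic nodal $A'$ $\leadsto$ $\mu_{A'}(H')\le 2$ $\leadsto$ $\mu_{A'[G]}(H'[G])\le 2$ $\leadsto$ $A'[G]$ nodal $\leadsto$ $B$ nodal by Morita invariance. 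The remaining checks — that $G$ preserves $R$, $J$, $H$ and that the various radicals and centers behave as claimed — are exactly observations (1)–(5) above and require only routine verification.
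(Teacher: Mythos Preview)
Your proposal is correct and follows essentially the same route as the paper: establish that $A[G]\subseteq H[G]$ is a B\"ackstr\"om pair via observations~(4)--(5), reduce to the basic case so that Corollary~\ref{C:Key} yields $\mu_A(H)\le 2$, tensor up to get $\mu_{A[G]}(H[G])\le 2$, and transport the conclusion back along the Morita equivalence furnished by Theorem~\ref{T:InvariantAction}. The only points the paper makes explicit that you leave implicit are the passage to the radical completion at the outset (this is what guarantees Krull--Schmidt and hence that the basic progenerator $P$ can be taken $G$--invariant, resolving exactly the obstacle you flag) and the appearance of the opposite ring $\bigl(A'[G]\bigr)^\circ$ in the Morita equivalence, which is harmless since nodality is left--right symmetric.
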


\begin{proof} A semi-local order is nodal if and only if its radical completion is nodal. Therefore we may without loss  of generality assume that $A$ is complete. Let us first suppose  that $A$ is basic. Let $H$ be the hereditary cover of $A$ (see (\ref{E:HereditaryCover}) for a description of $H$). By Corollary \ref{C:Key} we have: $\mu_A(H) \le 2$. It follows that $\mu_{A\bigl[G, (\phi, \omega)\bigr]}\bigl(H\bigl[G, (\widetilde\phi, \omega)\bigr]\bigr) \le 2$, too. Next, $J\bigl[G, (\phi, \omega)\bigr]$ is the common radical of $A\bigl[G, (\phi, \omega)\bigr]$ and $H\bigl[G, (\widetilde\phi, \omega)\bigr]$, hence $A\bigl[G, (\phi, \omega)\bigr] \subseteq H\bigl[G, (\widetilde\phi, \omega)\bigr]$ is a B\"ackstr\"om pair. By Corollary \ref{C:Key}, the order $A\bigl[G, (\phi, \omega)\bigr]$ is nodal. 

Let $A$ be now an arbitrary nodal order. Let $P$ be a projective generator of the category of finitely generated left $A$--modules, whose direct summands are pairwise non-isomorphic. Then $A' := \End_A(P)$ is  a basic nodal order;  see \cite[Proposition 1.3]{DrozdZembyk}. The $A$--module $P$ is invariant in the sense of Theorem \ref{T:InvariantAction}.  Hence, $A'$ admits an action of $G$ via a datum $(\psi, \chi)$ such that
$A'\bigl[G, (\psi, \xi)] \cong \End_{A[G, (\phi, \omega)]}\bigl(A\bigl[G, (\phi, \omega)\bigr] \otimes_A P \bigr)$. By the previous step we know that $A'\bigl[G, (\psi, \xi)]$ is a nodal order. Hence, the opposite order 
$\bigl(A'\bigl[G, (\psi, \xi)]\bigr)^\circ$ is nodal as well.  On the other hand, $A\bigl[G, (\phi, \omega)\bigr] \otimes_A P$ is a projective generator in the category of  finitely generated left 
$A\bigl[G, (\phi, \omega)\bigr]$--modules. Hence, $A\bigl[G, (\phi, \omega)\bigr]$ and $\bigl(A'\bigl[G, (\psi, \xi)])^\circ$ are Morita equivalent. Applying again \cite[Proposition 1.3]{DrozdZembyk}, we conclude that $A\bigl[G, (\phi, \omega)\bigr]$ is nodal, as asserted. 
\end{proof}

\end{document}